	%
%
%%%%%%%%%%%%%%%%%%%%%%%%%%%%%%%%%%%%%%%%%%%%%%%%%%%%%%%%%%%%%%%%

%%%%%%%%%%%%%%%%%%%%%%%%%%%%%%%%%%%%%%%%%%%%%%%%%%%%%%%%%%%%%%%%%  
\documentclass[a4paper,12pt]{amsart}  
\usepackage{amsmath,amssymb,amsthm}     
\usepackage[utf8]{inputenc}
\usepackage{enumitem}
\usepackage{color}
%\usepackage{arevsym}
%%%%%%%%%%%%%%%%%%%%%%%%%%%%%%%%%%%%%%%%%%%%%%%%%%%%%%%%%%%%%%%%%     

\addtolength{\oddsidemargin}{-2cm}
\addtolength{\evensidemargin}{-2cm}
\addtolength{\headheight}{5pt}
\addtolength{\headsep}{.5cm}
\addtolength{\textheight}{-.0cm}
\addtolength{\textwidth}{4cm}
\addtolength{\footskip}{.5cm}
\parskip1ex

\theoremstyle{plain}      
 
\newtheorem{thm}{Theorem}[section]     
\newtheorem{theorem}[thm]{Theorem}

\newtheorem{lemma}[thm]{Lemma}     
\newtheorem{prop}[thm]{Proposition}

\theoremstyle{definition}      
     
\newtheorem{definition}[thm]{Definition}  
 
\newtheorem{remark}[thm]{Remark}

%%%%%%%%%%%%%%%%%%%%%%%%%%%%%%%%%%%%%%%%%%%%%%%%%%%%%%%%%%%%%%%%%     

\DeclareMathAlphabet{\doba}{U}{msb}{m}{n}

\gdef\mN{\doba{N}}

\gdef\mR{\doba{R}}

\def\CO{\mathrm{CO}}

\def\di{{\rm d}}

\let\<\langle 
\let\>\rangle 
\let\ti\tilde

\newcommand{\definedas}{\mathrel{\raise.095ex\hbox{\rm :}\mkern-5.2mu=}}

\title{}

\author{Andrei Moroianu, Mihaela Pilca}

\address{Andrei Moroianu \\ Université Paris-Saclay, CNRS,  Laboratoire de mathématiques d'Orsay, 91405, Orsay, France}
\email{andrei.moroianu@math.cnrs.fr}

\address{Mihaela Pilca\\Fakult\"at f\"ur Mathematik\\
Universit\"at Regensburg\\Universit\"atsstr. 31 
D-93040 Regensburg, Germany}
\email{mihaela.pilca@mathematik.uni-regensburg.de}

\subjclass[2010]{}

\keywords{}

%%%%%%%%%%%%%%%%%%%%%%%%%%%%%%%%%%%%%%%%%%%%%%%%%%%%%%%%%%%%%%%%%%%%%%%%%

\begin{document}   
	
	\title{Conformal vector fields on lcK manifolds}

	\begin{abstract}
		We show that any conformal vector field on a compact  lcK manifold is Killing with respect to the Gauduchon metric. Furthermore, we prove that any conformal vector field on a compact lcK manifold whose K\"ahler cover is neither flat, nor hyperk\"ahler, is holomorphic.
	\end{abstract}
	\maketitle
	
	\section{Introduction}
	
	It is well known that on a compact Kähler manifold every conformal vector field is Killing \cite[§90]{l}, and every Killing vector field is holomorphic. The aim of this paper is to extend these two results to compact locally conformally Kähler (lcK) manifolds. 
	
	Recall that a (compact) lcK manifold \cite{v} is a compact complex manifold $(M,J)$ together with a conformal class $c$ of Riemannian metrics such that in the neighbourhood of each point of $M$ there exists a Kähler metric in $c$ compatible with $J$. Equivalently, $(M,J,c)$ is lcK if the universal cover $\widetilde M$ of $M$ carries a Kähler metric $g_K$ in the induced conformal class $\tilde c$ compatible with the induced complex structure $\tilde J$. The simply connected Kähler manifold $(\widetilde M,\ti J, g_K)$ will henceforth be referred to as the Kähler cover of $(M,J,c)$.
	
	The interest of this notion is that many complex manifolds which for topological reasons do not carry Kähler metrics (like most complex surfaces with odd first Betti number \cite{f}, Hopf manifolds $S^1\times S^{2n-1}$, some OT manifolds \cite{ot}, etc.) have lcK structures instead.
	
	Every compact lcK manifold $(M,J,c)$ carries a distinguished metric $g_0\in c$, uniquely defined up to a positive constant, called the Gauduchon metric \cite{g}. Given a conformal vector field $\xi$ on $(M,c)$, one cannot reasonably hope that it preserves any metric in the conformal class, simply because if $g\in c$ is preserved by $\xi$, then for any smooth function $f$ non-constant along the flow of $\xi$, the conformally equivalent metric $\ti g:=e^{2f}g$ is no longer preserved by $\xi$. 
	What one can hope, however, is to show that $\xi$ preserves the Gauduchon metric $g_0$. Note that if $\xi$ were also holomorphic, this would be almost tautological. Indeed, since $g_0$ is defined up to a constant by $c$ and $J$, the flow of $\xi$ would be homothetic with respect to $g_0$, and on a compact Riemannian manifold every homothetic vector field is Killing. 

Our first result (Theorem \ref{thmkilling} below) says that this is indeed the case: {\em every conformal vector field on a compact lcK manifold preserves the Gauduchon metric}. This result was conjectured and proved under some more restrictive assumptions in \cite{mo}.

We then move to the next natural question: is every conformal vector field on a compact lcK manifold holomorphic? It turns out that in this generality the answer is negative. Indeed, one can easily construct lcK metrics with non-holomorphic Killing vector fields on Hopf manifolds $S^1\times S^{2n-1}$ and on products of $S^1$ with 3-Sasakian manifolds  (see \cite[Remark 2.4 (ii)]{mo}). 

However, these are basically the only possible counterexamples: our second main result (Theorem \ref{thmholomorph} below) states that {\em if $(M,J,c)$ is not a Hopf manifold or locally conformally hyperkähler} (that is, if the Kähler metric on the universal cover is not hyperkähler or flat), then {\em every conformal vector field is holomorphic}. 

Unlike the analogous result on Kähler manifolds, which is a simple consequence of Cartan's formula (see {\em e.g.} \cite[Prop. 15.5]{kg}), this extension to lcK geometry is highly non-trivial, and is based on a recent result by M.~Kourganoff \cite[Theorem 1.5.]{k} which describes compact lcK manifolds whose Kähler cover is reducible and non-flat. 

Let us now explain in more detail the strategy of the proofs. We start by showing (in Prop. \ref{propkaehler}) that on a Kähler manifold (not necessarily compact), the divergence of any conformal vector field is harmonic. Note that in the compact case, this already implies Lichnerowicz' result mentioned above.
We then consider a conformal vector field $\xi$ on a compact lcK manifold $(M,J,c)$ and apply this result to the lift $\ti \xi$ of $\xi$ to the Kähler cover $(\widetilde M,\ti J, g_K)$ of $(M,J,c)$. Using the theory of Weyl structures and the existence of Gauduchon metrics, we show in Prop. \ref{propDharmonic} that $\ti\xi$ has constant divergence on $\widetilde M$ with respect to $g_K$. We then interpret this condition in terms of the Gauduchon metric on $M$ and conclude by an integration argument, using the compactness of $M$.

The proof of Theorem \ref{thmholomorph} goes roughly as follows. If $\xi$ is a conformal vector field on $(M,J,c)$, then its lift $\tilde \xi$ is not only conformal, but even homothetic on the Kähler cover $(\widetilde M,\ti J, g_K)$, thanks to Theorem \ref{thmkilling}. In particular $\ti\xi$ is affine, {\em i.e.} preserves the Levi-Civita connection of $g_K$. An easy argument  \cite[Lemma 2.1]{mo} shows that if $g_K$ is irreducible and not hyperkähler, then $\tilde\xi$ is holomorphic. 

In the case where $g_K$ is non-flat but has reducible holonomy, we make use of a deep result by M. Kourganoff, stating that $(\widetilde M, g_K)$ is a Riemannian product with a non-trivial flat factor $\mR^q$. Using the fact that $\pi_1(M)$ acts on $\widetilde M$ cocompactly and properly discontinuously by similarities of the metric $g_K$, preserving the homothetic vector field $\ti\xi$, we then show in Proposition \ref{propN} that the component of $\ti\xi$ on $\mR^q$ vanishes. This is the core of the argument and uses the explicit form of conformal vector fields on flat spaces. 

The end of the proof uses a result by K.P. Tod \cite[Prop. 2.2]{tod} involving Einstein-Weyl structures, and the irreducibility of non-flat cone metrics over complete manifolds proved by S. Gallot \cite[Prop. 3.1]{gallot}.

	{\bf Acknowledgments.} This work was supported by the Procope Project No. 57445459 (Germany) /  42513ZJ (France).
	
	\section{Preliminaries}
	In this preliminary section we briefly recall the main definitions and collect a few known basic results that will be needed throughout the paper.
	
	Let $M$ be a smooth $n$-dimensional manifold. For every real number $r$, the weight bundle $L^r$ is the real line bundle associated to the frame bundle of $M$ with respect to the representation $|\mathrm{det}|^{\frac r n }$. Two Riemannian metrics $g,\ \widetilde g$ on $M$ are said to be conformally equivalent if there exists a function $f$ such that $\widetilde g=e^{2f}g$. A conformal structure on $M$ is an equivalence class of Riemannian metrics with respect to this equivalence relation. 
	
	If $c$ is a conformal structure and $g\in c$ is a Riemannian metric, its volume element $\mathrm{vol}_g$ is a section of $L^{-n}$. The volume element of a conformally equivalent metric  $\widetilde g=e^{2f}g$ is $\mathrm{vol}_{\widetilde g}=e^{-nf}\mathrm{vol}_g$, thus showing that $(\mathrm{vol}_g)^{\frac2n}\otimes g$ is a section of $L^{-2}\otimes \mathrm{Sym}^2(\mathrm{T}^*M)$ which does not depend on the choice of $g$. We will sometimes identify $c$ with this section.
	
	Let $(M,c)$ be an $n$-dimensional conformal manifold. A vector field $\xi$ on $M$  is called conformal if its flow preserves the conformal class $c$, \emph{i.e.} for any metric $g\in c$, its Lie derivative with respect to $\xi$ is proportional to~$g$: $\mathcal{L}_\xi g=\lambda g$, for some function $\lambda\in\mathcal{C}^\infty(M)$. 
	
	We recall that on a given Riemannian manifold $(M,g)$, the divergence of a vector field is the trace of the endomorphism $\nabla^g X$ of the tangent bundle: $\mathrm{div}^g X:=\mathrm{tr}(\nabla^g X)$, where $\nabla^g$ is the Levi-Civita connection of $g$. The divergence of a vector field is equal to the opposite of the codifferential of its dual $1$-form $X^\flat:=g(X, \cdot)$, \emph{i.e.} $\mathrm{div}^{g} X=-\delta^g (X^\flat)$, where the codifferential $\delta^g$ is the formal adjoint of the exterior differential $\di$ and is expressed in terms of a local $g$-orthonormal basis $\{e_i\}_{i}$ as follows: $\delta^g\alpha=-\displaystyle\sum_{i=1}^n e_i\lrcorner \nabla^g_{e_i} \alpha$, for all forms $\alpha$ on $M$. In the sequel we will drop the metric $g$ in the notation each time the metric is clear from the given context.
	
		Taking traces in the defining equality of a conformal vector field, $\mathcal{L}_\xi g=\lambda g$, shows that  necessarily $\lambda=-\frac{2}{n} \delta^g\xi^\flat$, for any metric $g\in c$. Thus, if $\xi$ is a conformal vector field on $(M,c)$, then
	\begin{equation}\label{eqliemetric}
	\mathcal{L}_{\xi} g=-\frac{2}{n}(\delta^g \xi^\flat) g, \quad \forall g\in c.
	\end{equation}
	In particular, a conformal vector field $\xi$ on $(M,c)$ is Killing with respect to some metric $g\in c$ if and only if $\delta^g\xi^\flat=0$.
	
	The condition that a vector field $\xi$ is conformal is also equivalent to the fact that the covariant derivative of $\xi^\flat$ with respect to any metric $g\in c$ has no trace-free symmetric component, \emph{i.e.}:
	$$\nabla^{g}_X\xi^\flat=\frac{1}{2}X\lrcorner \di\xi^\flat-\frac{1}{n}(\delta^g \xi^\flat)X^\flat, \quad \forall X\in \mathrm{T}M.$$

	\begin{definition}
	A {\it Weyl structure} on a conformal manifold $(M,c)$ is a torsion-free linear connection $D$ which preserves the conformal class $c$. If $D$ has reducible holonomy, then $(M,c,D)$ is called {\it Weyl-reducible}.
	
	\end{definition}
The condition that $D$ preserves the conformal class $c$ means that for each metric $g\in c$, there exists a unique $1$-form $\theta^g\in\Omega^1(M)$, called the {\it Lee form} of $D$ with respect to $g$, such that 
\begin{equation}\label{dg}Dg=-2\theta^g\otimes g.
\end{equation}
The Weyl connection $D$ is then related to $\nabla^g$ by
	\begin{equation}\label{weylstr}
	D_X=\nabla^g_X+\theta^g(X)\mathrm{Id} + \theta^g\wedge X, \quad \forall X\in  \mathrm{T}M,
	\end{equation}
	where $\theta^g\wedge X$ is the skew-symmetric endomorphism of $\mathrm{T}M$ defined by $$(\theta^g\wedge X)(Y):=\theta^g(Y) X-g(X,Y)(\theta^g)^\sharp.$$
	
	A Weyl connection $D$ is called {\it closed} if it is locally the Levi-Civita connection of a (local) metric in $c$ and is called {\it exact} if it is the Levi-Civita connection of a globally defined metric in~$c$. Equivalently, $D$ is closed (resp. exact) if its Lee form with respect to one (and hence to any) metric in $c$ is  closed (resp. exact).
	Note that in the particular case when the Weyl structure is exact, $D=\nabla^{\widetilde g}$ with $\widetilde g=e^{2f}g$, the Lee form $\theta^g$ of $D$ with respect to $g$ is given by $\theta^g=\di f$. This immediately follows from \eqref{dg}, since $Dg=\nabla^{\widetilde g}(e^{-2f}\widetilde g)=-2\di f\otimes(e^{-2f}{\widetilde g})=-2\di f \otimes g$.

	If the manifold $M$ is compact of dimension greater than $2$, then for every Weyl connection~$D$ on $(M,c)$ there exists a unique (up to homothety) metric $g_0\in c$, called the {\it Gauduchon metric} of $D$, such that its associated Lee form $\theta_0$ is co-closed with respect to $g_0$, cf.~\cite{g}.
	
	%Let us remark that with the above normalization of $\theta$, we have in the special case when $D$ is the standard Weyl structure of an lcK manifold $(M, J, [g])$, that the Lee form of $g$ equals $-2\theta$, \emph{i.e.} $\di\Omega=-2\theta\wedge \Omega$, where $\Omega:=g(J\cdot, \cdot)$ is the fundamental $2$-form.
	
	The natural extension of \eqref{weylstr} to the bundle of  exterior $k$-forms reads:
	\begin{equation}\label{weylstrform}
	D_X\alpha=\nabla^g_X\alpha-k\theta^g(X)\alpha + X \wedge (\theta^{g})^{\sharp}\lrcorner \alpha-\theta^g \wedge (X\lrcorner \alpha), \quad \forall X\in \mathrm{T}M, \ \forall  \alpha\in\Omega^k(M).
	\end{equation}

	The codifferential $\delta^D:\Omega^k(M)\to L^{-2}\otimes \Omega^{k-1}(M)$ associated to a Weyl structure $D$ on $(M,c)$  is defined as follows:
	$$\delta^D\alpha=-\mathrm{tr}_{c}(D\alpha),$$
	where $\mathrm{tr}_{c}$ denotes the conformal trace with respect to $c$. More precisely, if $c= \ell^2\otimes g$, then $\delta^D$ is related to $\delta^g$ by the following formula
	\begin{equation}\label{deltaweyl1}
	\delta^D\alpha=\ell^{-2}(\delta^g\alpha-(n-2k)\theta^\sharp\lrcorner\alpha),
	\end{equation}
which directly follows by applying~\eqref{weylstrform} to any $k$-form $\alpha$ and a local $g$-orthonormal basis $\{e_i\}_i$:
	\begin{equation*}
	\begin{split}
	-\sum_{i=1}^ne_i\lrcorner D_{e_i}\alpha&=-\sum_{i=1}^ne_i\lrcorner \nabla^{g}_{e_i}\alpha+ k\theta^\sharp\lrcorner\alpha-n\theta^\sharp\lrcorner\alpha+(k-1)\theta^\sharp\lrcorner\alpha+\theta^\sharp\lrcorner\alpha\\
	&=\delta^g\alpha-(n-2k)\theta^\sharp\lrcorner\alpha.
	\end{split}
	\end{equation*}
	An exterior form $\alpha$ satisfying $\delta^D\alpha=0$ is called {\it $D$-coclosed}. According to \eqref{deltaweyl1}, $\alpha$ is $D$-coclosed if and only if for any metric $g\in c$, the codifferential of $\alpha$ verifies $\delta^g\alpha-(n-2k)\theta^\sharp\lrcorner\alpha=0$, where $\theta$ is the Lee form of $D$ with respect to $g$.
	
	The Weyl Laplacian $\Delta^D:\mathcal{C}^\infty(M)\to \mathcal{C}^\infty(L^{-2})$ is defined by $$\Delta^D\varphi:=\delta^D\di\varphi=-\mathrm{tr}_c(D\di\varphi),\quad  \forall \varphi\in\mathcal{C}^\infty(M).$$
	 For every metric $g\in c$ written as $g=\ell^{-2}\otimes c$, \eqref{deltaweyl1} applied to the $1$-form $\alpha=\di\varphi$ yields
	\begin{equation}\label{laplaceweyl} \Delta^D\varphi=\ell^{-2}(\Delta^g\varphi+(2-n)g(\theta,\di\varphi)), \quad \forall \varphi\in\mathcal{C}^\infty(M).
	\end{equation}
	A function $\varphi\in\mathcal{C}^\infty(M)$ satisfying $\Delta^D\varphi=0$ is called {\it $D$-harmonic}.
	
	\begin{lemma}\label{l22}
		On a Riemannian manifold $(M,g)$ the commutator between the Lie derivative and the codifferential acting on $1$-forms satisfies the following equation:
		\begin{equation}\label{commLdelta}
		[\delta,\mathcal{L}_X] Y^\flat=\delta((\mathcal{L}_Xg)(Y))-g(Y^\flat, \di(\delta X^\flat)), \quad \forall X, Y\in\Gamma( \mathrm{T}M).
		\end{equation}
	\end{lemma}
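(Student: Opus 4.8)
The plan is to reduce the identity to two elementary facts: a Leibniz-type decomposition of $\mathcal{L}_X Y^\flat$, and the behaviour of the divergence under the Lie bracket of vector fields. Every step will be purely local, so no compactness or global hypothesis enters.

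First I would expand the left-hand side. Since $\delta Y^\flat$ is a function, $\mathcal{L}_X(\delta Y^\flat)=X(\delta Y^\flat)$, hence
\[
[\delta,\mathcal{L}_X]Y^\flat=\delta(\mathcal{L}_X Y^\flat)-X(\delta Y^\flat).
\]
Next, directly from the definition of the Lie derivative one checks that, for any vector field $Z$,
\[
(\mathcal{L}_X Y^\flat)(Z)=X(g(Y,Z))-g(Y,[X,Z])=(\mathcal{L}_X g)(Y,Z)+g([X,Y],Z),
\]
so that $\mathcal{L}_X Y^\flat=(\mathcal{L}_X g)(Y)+[X,Y]^\flat$, where $(\mathcal{L}_X g)(Y)$ denotes the $1$-form $Z\mapsto(\mathcal{L}_X g)(Y,Z)$. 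Applying $\delta$ and substituting, and cancelling the common term $\delta\big((\mathcal{L}_X g)(Y)\big)$, the claimed equality \eqref{commLdelta} is seen to be equivalent to
\[
\delta([X,Y]^\flat)-X(\delta Y^\flat)=-g(Y^\flat,\di(\delta X^\flat)).
\]

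Now $\di(\delta X^\flat)$ is a $1$-form and $g(Y^\flat,\di(\delta X^\flat))=\big(\di(\delta X^\flat)\big)(Y)=Y(\delta X^\flat)$, while $\delta Z^\flat=-\mathrm{div}\,Z$ for any vector field $Z$; so the displayed identity is in turn equivalent to
\[
\mathrm{div}[X,Y]=X(\mathrm{div}\,Y)-Y(\mathrm{div}\,X).
\]
To establish this, I would apply both sides of $\mathcal{L}_{[X,Y]}=[\mathcal{L}_X,\mathcal{L}_Y]$ to the volume form $\mathrm{vol}_g$: the left-hand side equals $(\mathrm{div}[X,Y])\,\mathrm{vol}_g$, and the right-hand side, using $\mathcal{L}_Z\mathrm{vol}_g=(\mathrm{div}\,Z)\,\mathrm{vol}_g$ twice, equals $\big(X(\mathrm{div}\,Y)-Y(\mathrm{div}\,X)\big)\mathrm{vol}_g$ once the two terms quadratic in the divergences cancel. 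Comparing the two proves the identity, hence the lemma.

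There is no genuine obstacle here: the computation is entirely formal. The only point that needs care is the consistent use of the sign convention $\delta Z^\flat=-\mathrm{div}\,Z$ together with the correct reading of the right-hand side of \eqref{commLdelta}, namely that $(\mathcal{L}_X g)(Y)$ denotes a $1$-form and that the pairing $g(Y^\flat,\cdot)$ of $1$-forms agrees with evaluation on $Y$, so that all signs match up.
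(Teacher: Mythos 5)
Your proposal is correct and follows essentially the same route as the paper: the decomposition $\mathcal{L}_X Y^\flat=(\mathcal{L}_X g)(Y)+[X,Y]^\flat$ combined with the identity $\mathrm{div}[X,Y]=X(\mathrm{div}\,Y)-Y(\mathrm{div}\,X)$, itself obtained by applying $\mathcal{L}_{[X,Y]}=[\mathcal{L}_X,\mathcal{L}_Y]$ to the volume form. The only cosmetic difference is the sign convention ($\mathcal{L}_Z\mathrm{vol}_g=(\mathrm{div}\,Z)\,\mathrm{vol}_g$ versus the paper's $-\delta Z^\flat\,\mathrm{vol}_g$), and the paper additionally notes that one may pass to an orientation double cover to use the volume form, a harmless remark since the identity is local.
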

	
	\begin{proof} We can assume that $M$ is oriented, up to passing to a double cover.
		If $\mathrm{vol}_g$ denotes the volume form of $g$, then using the well known formula $\mathcal{L}_X\mathrm{vol}_g=-\delta X^\flat \,\mathrm{vol}_g$ (see for instance \cite[Appendix 6]{kn}) we compute for all vector fields $X$ and $Y$:
		\begin{equation*}
		\begin{split}
		\delta( [X,Y]^\flat) \,\mathrm{vol}_g&=-\mathcal{L}_{[X, Y]}\mathrm{vol}_g=-[\mathcal{L}_X, \mathcal{L}_Y]\mathrm{vol}_g=\mathcal{L}_X(\delta Y^\flat \,\mathrm{vol}_g)-\mathcal{L}_Y(\delta X^\flat \,\mathrm{vol}_g)\\
		&=X(\delta Y^\flat)\mathrm{vol}_g-Y(\delta X^\flat)\mathrm{vol}_g,
		\end{split}
		\end{equation*}
		and thus 
		\begin{equation}\label{eqcomm}
		\delta( [X,Y]^\flat)=X(\delta Y^\flat)-Y(\delta X^\flat)=\mathcal{L}_X (\delta Y^\flat)-g(Y^\flat, \di(\delta X^\flat)).
		\end{equation}
		We now compute the commutator as follows:
		\begin{equation*}
		\begin{split}
		[\delta,\mathcal{L}_X](Y^\flat)&=\delta(\mathcal{L}_X Y^\flat)-\mathcal{L}_X (\delta Y^\flat)=\delta([X,Y]^\flat)+\delta((\mathcal{L}_Xg)(Y))-\mathcal{L}_X (\delta Y^\flat)\\
		&\overset{\eqref{eqcomm}}{=}\delta((\mathcal{L}_Xg)(Y))-g(Y^\flat, \di(\delta X^\flat)).
		\end{split}
		\end{equation*}
	\end{proof}

Recall that a complex manifold $(M,J, c)$ endowed with a conformal structure $c$ is called locally conformally K\"ahler (lcK) if around each point of $M$, every metric $g\in c$ can be conformally rescaled to a K\"ahler metric. Equivalently, $(M,J, c)$ is lcK if every $g\in c$ is Hermitian with respect to $J$ and the fundamental 2-form $\Omega:=g(J\cdot, \cdot)$ satisfies $\di\Omega=-2\theta\wedge\Omega$ for some closed 1-form $\theta$ called the Lee form of $(M,c,J)$ with respect to $g$.

If $(M,J, c)$ is lcK, the universal cover $\pi\colon \widetilde M\to M$, endowed with the induced complex structure $\tilde J$ and conformal structure $\tilde c$, admits a K\"ahler metric in $\tilde c$ with respect to which $\pi_1(M)$ acts by holomorphic homotheties. 

Indeed, if $g\in c$ is any metric on $(M,J)$ with Lee form~$\theta$, then the pull-back $\tilde\theta$ is exact on $\widetilde M$, \emph{i.e.} $\tilde\theta=\di\varphi$, for some function $\varphi\in\widetilde M$, and the metric $g_K:=e^{2\varphi}\tilde g$ is K\"ahler. Moreover, $\pi_1(M)$ acts on $(\widetilde M,\tilde J,g_K)$ by holomorphic homotheties. Hence, the Levi-Civita connection of $g_K$ projects to a closed, non-exact, Weyl connection $D$ on $M$, the so-called {\it standard Weyl connection} of the lcK manifold $(M,J, c)$, whose Lee form in the sense of \eqref{dg} is exactly $\theta$.
	
	\section{Conformal vector fields on K\"ahler manifolds}
	
	In this section we show that the divergence of a conformal vector field on a (not necessarily compact) K\"ahler manifold  is a harmonic function with respect to the K\"ahler metric.

	Let us first recall some well known results in K\"ahler geometry, whose proofs can be found for instance in \cite{kg}. Let $(M, J, g, \Omega)$ be an $n$-dimensional K\"ahler  manifold. 
	
	In the sequel $\{e_i\}_{i}$ denotes a local orthonormal basis with respect to the metric $g$. Then the K\"ahler $2$-form can be written as $\Omega=\displaystyle\frac{1}{2}\sum_{i=1}^n e_i\wedge Je_i$, where here and in the sequel we identify vectors and 1-forms using the metric $g$. We denote by $L$ the wedge product with $\Omega$: 
	$$L\colon \Omega^k(M)\to\Omega^{k+2}(M), \quad L(\alpha)=\Omega\wedge \alpha.$$
	
	The natural extension of $J$ acting as a derivation on forms is given by
	$$J\colon\Omega^k(M)\to\Omega^k(M), \quad J(\alpha)=\sum_{i=1}^n Je_i\wedge e_i\lrcorner\alpha.$$
	
	The twisted differential $\di^c$ is defined as follows:
	$$\di^c\colon\Omega^k(M)\to\Omega^{k+1}(M), \quad \di^c(\alpha)=\sum_{i=1}^n Je_i\wedge \nabla_{e_i}\alpha,$$
	and its formal adjoint is 
	$$\delta^c\colon\Omega^{k+1}(M)\to\Omega^{k}(M), \quad \delta^c(\alpha)=-*\di^c *=-\sum_{i=1}^n Je_i\lrcorner \nabla_{e_i}\alpha,$$
	where $\nabla$ denotes the Levi-Civita connexion of $g$.	The twisted Laplace operator is then defined by $\Delta^c:=\di^c\delta^c+\delta^c\di^c$.
	
	On a K\"ahler manifold, the following relations hold (for a proof see for instance \cite[§14]{kg}):
	\begin{equation}\label{Jd}
	[J,\di]=\di^c, \quad [J,\delta^c]=-\delta,
	\end{equation}
	\begin{equation}\label{deltacd}
	\delta^c\di+\di\delta^c=\delta^c\delta+\delta\delta^c=\di^c\delta+\delta\di^c=0,
	\end{equation}
	\begin{equation}\label{deltacL}
	[L, \delta^c]=-\di,
	\end{equation}
	\begin{equation}\label{laplacekaehler}
	\Delta^c=\Delta.
	\end{equation}
	
	After these preliminaries we can now prove the announced result:
	
	\begin{prop}\label{propkaehler}
		Let $(M, J, g, \Omega)$  be a (not necessarily compact) K\"ahler manifold of dimension $n>2$. If  $\eta$ is a conformal Killing $1$-form on $(M, g)$, then its codifferential $\delta\eta$ is a $g$-harmonic function.
	\end{prop}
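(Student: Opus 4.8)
The plan is to express the condition that $\eta$ is conformal Killing, take its codifferential, and use the Kähler identities \eqref{Jd}--\eqref{laplacekaehler} to show $\Delta(\delta\eta)=0$. First I would write the conformal Killing equation for the $1$-form $\eta=\xi^\flat$ in the form recalled in the preliminaries,
\begin{equation*}
\nabla_X\eta=\tfrac12 X\lrcorner\di\eta-\tfrac1n(\delta\eta)\,X^\flat,\qquad\forall X\in\mathrm{T}M,
\end{equation*}
so that in particular $\di^c\eta=\sum_i Je_i\wedge\nabla_{e_i}\eta$ and $\delta^c\eta=-\sum_i Je_i\lrcorner\nabla_{e_i}\eta$ can both be computed explicitly: the symmetric trace-free part of $\nabla\eta$ drops out, leaving $\di^c\eta$ proportional to something built from $\di\eta$ and $J$, and crucially $\delta^c\eta$ expressible in terms of $J\di\eta$ and $\di(\delta\eta)$ composed with $J$. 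The point is that all ingredients become first-order expressions in $\di\eta$ and $\delta\eta$, with no second covariant derivatives.

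The key step will be to produce a clean first-order identity relating $\delta\eta$ to the twisted operators. Concretely, I expect that contracting the conformal Killing equation against $Je_i$ and summing gives
\begin{equation*}
\delta^c\eta=-\tfrac12 J\,\di\eta\ \ (\text{suitably interpreted as a function? no --- as a }1\text{-form}),
\end{equation*}
so let me instead proceed via the function $\delta\eta$ directly: I would compute $\di(\delta\eta)$ by applying $\delta$ (or $\delta^c$) once more and using \eqref{Jd}. The strategy is to show $\delta^c\di(\delta\eta)=0$ and $\di^c(\delta\eta)$ is controlled, then invoke $\Delta^c=\Delta$ from \eqref{laplacekaehler}. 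More precisely, since $\delta\eta$ is a function, $\Delta(\delta\eta)=\delta\di(\delta\eta)$, and by \eqref{laplacekaehler} this equals $\Delta^c(\delta\eta)=\delta^c\di^c(\delta\eta)$ (the other term vanishes on functions). Using $\di^c=[J,\di]=J\di$ on functions, I get $\Delta(\delta\eta)=\delta^c(J\,\di(\delta\eta))=-[J,\delta^c](\di(\delta\eta))+J\delta^c(\di(\delta\eta))=\delta(\di(\delta\eta))+J\delta^c\di(\delta\eta)$ by the second relation in \eqref{Jd}; comparing with $\Delta(\delta\eta)=\delta\di(\delta\eta)$ this would force $J\delta^c\di(\delta\eta)=0$, which is automatic, so this route is circular. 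Hence the real content must come from feeding the conformal Killing equation into $\delta^c\di\eta$ or $\di^c\delta^c\eta$ and matching against $\Delta\eta$ via a Weitzenböck/Bochner-type computation; I would compute $\delta(\di\eta)$ and $\di(\delta\eta)$ and then apply $\delta$ once more to $\nabla\eta$ using that $\nabla^2\eta$ involves the curvature, which on a Kähler manifold interacts with $J$ through the identities above.

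The main obstacle, and the heart of the argument, will be organizing this second-order computation so that the curvature terms cancel. I anticipate the clean path is: (i) from the conformal Killing equation, derive that $\di^c\eta + J\di\eta$ (or a similar combination) is a multiple of $(\delta\eta)\Omega$ plus an exact-type term; (ii) apply $\delta^c$ and use \eqref{deltacd} and \eqref{deltacL} — in particular $[L,\delta^c]=-\di$ will convert the $(\delta\eta)\Omega=L(\delta\eta)$ term into $\di(\delta\eta)$ up to lower-order pieces; (iii) read off that $\di(\delta\eta)$ is both closed and co-closed, i.e. harmonic as a $1$-form, whence $\delta\eta$ is a harmonic function. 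The delicate point is step (ii): tracking the precise constants (which is where $n>2$ enters, since $\delta^c$ shifts form-degree and the conformal trace has $(n-2k)$-type coefficients) and making sure the non-Kähler error terms genuinely vanish. I would double-check the constants on a model example such as a conformal vector field on $\mathbb{C}^n$ with its flat Kähler metric before trusting the general computation.
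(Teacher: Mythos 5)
Your closing outline (i)--(iii) is exactly the route the paper takes, and it does work; but as written the proposal never carries out the decisive computation, and the middle of your argument heads down a blind alley that you should discard: no Weitzenb\"ock or Bochner formula is needed, no second covariant derivative of $\eta$ ever appears, and there are no curvature terms to cancel --- the whole proof is first-order commutator algebra with the operators $J$, $L$, $\di^c$, $\delta^c$. Concretely, plugging the conformal Killing equation into the definition of $\di^c$ gives
\begin{equation*}
\di^c\eta=\tfrac12 J\di\eta+\tfrac2n(\delta\eta)\,\Omega ,
\end{equation*}
and $[J,\di]=\di^c$ applied to $\eta$ converts $J\di\eta$ into $\di J\eta+\di^c\eta$, whence
\begin{equation*}
\di^c\eta=\di J\eta+\tfrac4n L(\delta\eta).
\end{equation*}
Applying $\delta^c$ and using $\delta^c\di=-\di\delta^c$, the relation $[L,\delta^c]=-\di$ on the \emph{function} $\delta\eta$ (where $\delta^c$ vanishes), and $\delta^c J\eta=J\delta^c\eta+\delta\eta=\delta\eta$, one obtains
\begin{equation*}
\delta^c\di^c\eta+\tfrac{n-4}{n}\,\di(\delta\eta)=0 .
\end{equation*}
A final application of $\delta$, together with \eqref{deltacd} and $\Delta^c=\Delta$, yields $\tfrac{2n-4}{n}\Delta(\delta\eta)=0$, and this last coefficient is where $n>2$ enters (not in any $(n-2k)$-type conformal trace, which plays no role here). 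So: correct strategy, but your proposal is a plan with the key identity unverified, your tentative guess $\delta^c\eta=-\tfrac12 J\di\eta$ is not the relevant identity, and the worry about ``non-K\"ahler error terms'' and curvature cancellation is unfounded.
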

	
	\begin{proof}
		Let $\eta$ be a conformal Killing $1$-form on $(M,g)$, \emph{i.e.} the dual $1$-form of a conformal vector field on $M$. The covariant derivative of $\eta$ in the direction of any vector field $X$ is given as follows (see \cite{uwe}):
		$$\nabla_X \eta=\frac{1}{2}X\lrcorner \di\eta-\frac{1}{n}(\delta \eta)X.$$
		We thus compute using the above commutator relations:
		\begin{equation*}
		\begin{split}
		\di^c \eta&=\sum_{i=1}^n Je_i \wedge \nabla_{e_i}\eta=\frac{1}{2}\sum_{i=1}^n Je_i\wedge e_i\lrcorner \di\eta-\frac{1}{n}(\delta \eta)\sum_{i=1}^n Je_i \wedge e_i\\
		&=\frac{1}{2}J\di\eta+\frac{2}{n}(\delta\eta)\Omega\overset{\eqref{Jd}}{=}\frac{1}{2}\di J\eta+\frac{1}{2}\di^c\eta+\frac{2}{n}L(\delta\eta),
		\end{split}
		\end{equation*}
		hence $\di^c\eta=\di J\eta+\frac{4}{n}L(\delta\eta)$. Applying $\delta^c$ to this equality yields
		\begin{equation*}
		\begin{split}
		\delta^c\di^c\eta&=\delta^c\di J\eta+\frac{4}{n}\delta^c L(\delta\eta)\overset{\eqref{deltacd}, \eqref{deltacL}}{=}-\di \delta^cJ\eta+\frac{4}{n}L\delta^c (\delta\eta)+\frac{4}{n}\di(\delta\eta)\overset{\eqref{Jd}}{=}-\di \delta\eta+\frac{4}{n}\di(\delta\eta),
		\end{split}
		\end{equation*}
		and thus $\delta^c\di^c\eta+\frac{n-4}{n}\di(\delta\eta)=0.$ Applying now $\delta$ to this equality yields
		$$0=\delta\delta^c\di^c\eta+\frac{n-4}{n}\delta\di(\delta\eta)\overset{\eqref{deltacd}}{=}\delta^c\di^c(\delta\eta)+\frac{n-4}{n}\delta\di(\delta\eta)=\Delta^c(\delta\eta)+\frac{n-4}{n}\Delta(\delta\eta)\overset{\eqref{laplacekaehler}}{=}\frac{2n-4}{n}\Delta(\delta\eta).$$
		Since $n>2$, it follows that $\Delta(\delta\eta)=0$, so $\delta\eta$ is $g$-harmonic.
	\end{proof}	
	
	\begin{remark}
		In terms of vector fields, Proposition~\ref{propkaehler} can be reformulated as follows:\\  The divergence of a conformal vector field on a K\"ahler manifold of real dimension greater than $2$ is a harmonic function with respect to the K\"ahler metric.
	\end{remark}
	
	\begin{remark}	
		If the manifold $M$ is moreover assumed to be compact, a direct consequence of Proposition \ref{propkaehler} is the well-known result of A. Lichnerowicz \cite[§90]{l} stating that a conformal vector field on a compact K\"ahler manifold of real dimension greater than 2 is necessarily Killing with respect to the K\"ahler metric.
	\end{remark}
	
	\section{Weyl-harmonic functions}
	
	In this section we prove that harmonic functions with respect to a Weyl structure on a compact conformal manifold are necessarily constant.
	
	\begin{prop}\label{propDharmonic}
		Let $(M,c)$ be a compact conformal manifold of dimension $n>2$ endowed with a Weyl structure~$D$. Then any $D$-harmonic function on $M$ is constant.
	\end{prop}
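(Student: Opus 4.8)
The plan is to exploit the freedom in choosing a metric within the conformal class. Fix the Gauduchon metric $g_0 \in c$ of the Weyl structure $D$, with associated Lee form $\theta_0$, which by Gauduchon's theorem satisfies $\delta^{g_0}\theta_0 = 0$. Writing $g_0 = \ell^{-2}\otimes c$ and applying \eqref{laplaceweyl} with $g = g_0$, a $D$-harmonic function $\varphi$ satisfies
\begin{equation}\label{eqplan1}
\Delta^{g_0}\varphi + (2-n)\, g_0(\theta_0, \di\varphi) = 0.
\end{equation}
The strategy is then to integrate a suitable multiple of this equation against $\varphi$ over the compact manifold $M$ and extract a vanishing statement for $\di\varphi$.

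First I would multiply \eqref{eqplan1} by $\varphi$ and integrate with respect to the volume form of $g_0$. Integration by parts on the first term gives $\int_M \varphi\,\Delta^{g_0}\varphi = \int_M |\di\varphi|^2$. For the second term, $\int_M \varphi\, g_0(\theta_0,\di\varphi) = \frac12\int_M g_0(\theta_0, \di(\varphi^2))$, and integrating by parts moves the differential onto $\theta_0$, producing $\frac12\int_M \varphi^2\, \delta^{g_0}\theta_0 = 0$ precisely because $g_0$ is the Gauduchon metric. Hence $\int_M |\di\varphi|^2_{g_0} = 0$, which forces $\di\varphi = 0$, i.e. $\varphi$ is constant (on each connected component; one may assume $M$ connected). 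As in the proof of Lemma \ref{l22}, if $M$ is non-orientable one passes to the oriented double cover, on which the pulled-back Weyl structure still has the pulled-back Gauduchon metric, and concludes that the pullback of $\varphi$ is constant, hence so is $\varphi$.

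The only genuine subtlety — and the step I would be most careful about — is the integration by parts on the term involving $\theta_0$: one needs the identity $\int_M g_0(\alpha, \di\psi)\,\mathrm{vol}_{g_0} = \int_M \psi\, \delta^{g_0}\alpha\,\mathrm{vol}_{g_0}$ for a $1$-form $\alpha$ and a function $\psi$ on compact $M$, which is exactly the statement that $\delta^{g_0}$ is the formal adjoint of $\di$; this is recalled in the Preliminaries. Everything else is routine. Note the hypothesis $n>2$ enters only through the coefficient $(2-n)\neq 0$ (so that the second term is genuinely present) and, implicitly, through the existence of the Gauduchon metric, which requires $\dim M > 2$; but in fact the argument above shows $\di\varphi=0$ regardless of the sign or size of $(2-n)$, since that coefficient multiplies a term that integrates to zero on its own.
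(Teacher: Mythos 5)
Your proposal is correct and follows essentially the same route as the paper's own proof: choose the Gauduchon metric, rewrite $D$-harmonicity via \eqref{laplaceweyl} as $\Delta^{g_0}\varphi=(n-2)g_0(\di\varphi,\theta_0)$, multiply by $\varphi$, integrate over the compact manifold, and use $\delta^{g_0}\theta_0=0$ to make the Lee-form term vanish, forcing $\di\varphi=0$. Your extra remarks on orientability and on the (non-)role of the coefficient $2-n$ are accurate but not needed.
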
	
	
	\begin{proof}
		We consider the Gauduchon metric $g_0\in c$, which is (up to homothety) the unique metric in $c$ whose associated $1$-form $\theta_0$ is $g_0$-coclosed. If $c=\ell_0^2\otimes g_0$, then \eqref{laplaceweyl} yields:
		$$\Delta^{D}\varphi=\ell_0^{-2}(\Delta^{g_0}\varphi+(2-n)g_0(\di\varphi, \theta_0)), \quad \text{ for all }\varphi\in \mathcal{C}^\infty(M).$$
		Thus a function $\varphi\in \mathcal{C}^\infty(M)$ is $D$-harmonic if and only if
		$$\Delta^{g_0}\varphi=(n-2)g_0(\di\varphi, \theta_0).$$
		Multiplying this equality with $\varphi$ and integrating over the compact manifold $M$ yields
		$$\int_M |\di\varphi|^2_{g_0}\mathrm{vol}_0= \int_M \varphi \Delta^{g_0}\varphi \mathrm{vol}_0=\frac{n-2}{2}\int_M g_0(\di\varphi^2, \theta_0)\mathrm{vol}_0=\frac{n-2}{2}\int_M g_0(\varphi^2, \delta_0\theta_0)\mathrm{vol}_0=0,$$
		which implies that $\di\varphi=0$. Thus $\varphi$ is constant, since $M$ is compact. 
	\end{proof}	
	
	\begin{remark}
		Let $(M^n,c)$ be a conformal manifold of dimension $n>2$ endowed with a Weyl structure~$D$. To each vector field $\xi$ can be associated the following function
		\begin{equation}\label{deffunction}
		f_{\xi}:=\mathrm{div}^{\nabla^g}\xi+n\theta(\xi),
		\end{equation}
		where $g\in c$ and $\theta$ is its associated $1$-form. Then $f_\xi$ is independent of the choice of the metric $g\in c$, as shown by a direct computation. Namely, if $\widetilde g=e^{2f}g$, then $\widetilde \theta=\theta-\di f$. Taking a local orthonormal basis $\{e_i\}_i$ with respect to $g$, then $\{\tilde e_i:=e^{-f}e_i\}_i$ is a local orthonormal basis with respect to $\tilde g$ and we obtain using \cite[Thm. 1.159 a)]{besse}:
		\begin{equation*}
		\begin{split}
		\mathrm{div}^{\nabla^{\widetilde g}}\xi+n\widetilde\theta(\xi)&=\sum_{i=1}^n \widetilde g(\tilde e_i, \nabla^{\widetilde g}_{\tilde e_i}\xi)+n(\theta-\di f)(\xi)\\
		&=\sum_{i=1}^n g(e_i, \nabla^{\widetilde g}_{e_i}\xi)+n(\theta-\di f)(\xi)\\
		&=\sum_{i=1}^n g(e_i, \nabla^{g}_{e_i}\xi +\di f(e_i)\xi+\di f(\xi)e_i-g(e_i, \xi)\mathrm{grad}f)+n(\theta-\di f)(\xi)\\
		&=\mathrm{div}^{\nabla^{g}}\xi+\di f (\xi)+n\di f (\xi)-\di f (\xi)+n(\theta-\di f)(\xi)\\
		&=\mathrm{div}^{\nabla^{g}}\xi+n\theta(\xi).
		\end{split}
		\end{equation*}
	\end{remark}
	
	\section{Conformal vector fields on lcK manifolds}
	
	We are now ready to prove the counterpart in lcK geometry of the above mentioned result of A.~Lichnerowicz for compact K\"ahler manifolds. More precisely, we show the following: 
	
	\begin{theorem}\label{thmkilling}
		Let $(M,J,c)$ be a compact lcK manifold. Then every conformal vector field on $(M, c)$ is Killing with respect to the Gauduchon metric and the induced vector field on the universal cover is homothetic with respect to the Kähler metric.
	\end{theorem}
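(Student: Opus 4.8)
The plan is to lift the problem to the Kähler cover, apply Proposition~\ref{propkaehler} there, transport the resulting harmonicity back to $M$ through the standard Weyl connection, invoke Proposition~\ref{propDharmonic} to force the divergence on the cover to be constant, and finally use the uniqueness of the Gauduchon metric to conclude that $\xi$ is Killing on $M$. We assume $M$ has real dimension $n>2$ (for $n=2$ neither proposition applies). \textit{First}, let $\xi$ be a conformal vector field on $(M,c)$ and let $\ti\xi$ be its lift to the Kähler cover $(\widetilde M,\ti J,g_K)$. Since $g_K\in\ti c$ and $\xi$ is conformal, $\ti\xi$ is conformal on the Kähler manifold $(\widetilde M,g_K)$, so by Proposition~\ref{propkaehler} the function $\mathrm{div}^{g_K}\ti\xi=-\delta^{g_K}\ti\xi^\flat$ is $g_K$-harmonic on $\widetilde M$. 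Because $\widetilde M$ is non-compact, this by itself does not yet make it constant.

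\textit{Next}, let $D$ be the standard Weyl connection of $(M,J,c)$, so that $\nabla^{g_K}$ is the pull-back of $D$ and, viewed as a (necessarily exact) Weyl connection on $(\widetilde M,\ti c)$, has vanishing Lee form relative to $g_K$. Let $f_\xi=\mathrm{div}^{\nabla^g}\xi+n\theta^g(\xi)$ be the function on $M$ from~\eqref{deffunction}, which is independent of the choice of $g\in c$, where $\theta^g$ denotes the Lee form of $D$ relative to $g$. Pulling $f_\xi$ back to $\widetilde M$ and evaluating it with the metric $g_K$, whose $D$-Lee form vanishes, one obtains $\pi^*f_\xi=\mathrm{div}^{g_K}\ti\xi$. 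The Weyl Laplacian is natural under the covering projection, and~\eqref{laplaceweyl} gives $\Delta^{\nabla^{g_K}}=\ell_K^{-2}\Delta^{g_K}$ on functions (again because the $D$-Lee form of $g_K$ is zero), so the $g_K$-harmonicity of $\pi^*f_\xi$ is equivalent to $\Delta^Df_\xi=0$ on $M$. By Proposition~\ref{propDharmonic}, $f_\xi$ is constant; hence $\mathrm{div}^{g_K}\ti\xi\equiv na$ for some $a\in\mR$, and~\eqref{eqliemetric} yields $\mathcal{L}_{\ti\xi}g_K=2a\,g_K$. Thus $\ti\xi$ is homothetic with respect to $g_K$, which already establishes the second assertion of the theorem.

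\textit{Finally}, a homothetic vector field is affine, so $\mathcal{L}_{\ti\xi}\nabla^{g_K}=0$; since $\ti\xi$ and $\nabla^{g_K}$ are $\pi_1(M)$-invariant and project to $\xi$ and $D$ respectively, it follows that $\mathcal{L}_\xi D=0$, i.e. the flow $\phi_t$ of $\xi$ preserves both $c$ and $D$. Therefore $\phi_t^*g_0\in c$ has $D$-Lee form $\phi_t^*\theta_0$, which is coclosed with respect to $\phi_t^*g_0$ because $\theta_0$ is coclosed with respect to $g_0$; by the uniqueness up to homothety of the Gauduchon metric of $D$ we conclude $\phi_t^*g_0=\kappa(t)\,g_0$ with $\kappa(t)>0$ and $\kappa(0)=1$. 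Differentiating at $t=0$ gives $\mathcal{L}_\xi g_0=\kappa'(0)\,g_0$, so $\mathrm{div}^{g_0}\xi=\tfrac{n}{2}\kappa'(0)$ is constant; integrating over the compact manifold $M$ forces $\kappa'(0)=0$, that is, $\xi$ is Killing with respect to $g_0$.

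The delicate point is the second step: the divergence of $\ti\xi$ is a priori harmonic only on the \emph{non-compact} cover, and the crux is to recognize it, through the function $f_\xi$ and the Weyl-geometric reformulation of harmonicity, as the pull-back of a $D$-harmonic function on the compact base, where Gauduchon's theorem forces it to be constant. The remaining ingredients — the identity $\pi^*f_\xi=\mathrm{div}^{g_K}\ti\xi$, the naturality of the Weyl Laplacian under the covering, and the classical fact that a homothetic vector field on a compact Riemannian manifold is Killing — are routine.
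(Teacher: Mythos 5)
Your proposal is correct. The first two thirds coincide with the paper's argument: you lift $\xi$, apply Proposition~\ref{propkaehler} to get $g_K$-harmonicity of the divergence, identify it with $\pi^*f_\xi$ via the conformal invariance of $f_\xi$ from \eqref{deffunction}, and invoke Proposition~\ref{propDharmonic} to make $f_\xi$ constant. You then correctly observe that \eqref{eqliemetric} immediately gives $\mathcal{L}_{\ti\xi}g_K=\tfrac{2C}{n}g_K$, i.e.\ the homothety of $\ti\xi$, \emph{before} knowing that $\xi$ is Killing — the paper only records this at the very end, after $\delta_0\eta_0=0$ is established. Where you genuinely diverge is the last step. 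The paper proves $\delta_0\eta_0=0$ by a tensor computation: it combines \eqref{cst} with Cartan's formula, the commutator identity of Lemma~\ref{l22}, and a second integration by parts against $\theta_0$ on the compact manifold. You instead upgrade the homothety of $\ti\xi$ to affineness of its flow, project to $\mathcal{L}_\xi D=0$, and use the uniqueness (up to homothety) of the Gauduchon metric of $D$ together with the naturality of the codifferential under diffeomorphisms to get $\phi_t^*g_0=\kappa(t)g_0$; integrating the resulting constant divergence then kills $\kappa'(0)$. This is exactly the ``almost tautological'' mechanism the introduction describes for holomorphic $\xi$, and your observation that preservation of $D$ (rather than of $J$) already suffices makes it work in general. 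Your route renders Lemma~\ref{l22} and equation \eqref{eq1} unnecessary and is conceptually cleaner; the paper's route is more computational but stays entirely at the level of pointwise identities on $(M,g_0)$ without appealing to flows, completeness, or the uniqueness statement in Gauduchon's theorem. Both are sound; your restriction to $n>2$ is also the correct implicit hypothesis, since both propositions (and the very definition of the Gauduchon metric) require it.
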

	
	\begin{proof}
		Let $\xi$ be a conformal vector field on $(M,c)$ and let $\eta_0:=g_0(\xi, \cdot)$ be its dual 1-form with respect to the Gauduchon metric $g_0$. Then $\xi$ is Killing with respect to $g_0$ if and only if $\delta_0\eta_0=0$. 
		
		We consider the universal cover $\pi\colon \widetilde M\to M$ endowed with the pull-back $(\tilde J,\tilde g_0,\tilde \theta_0)$ of the lcK structure $(J, g_0, \theta_0)$, where $\theta_0$ is the Lee form defined by $\di\Omega_0=-2\theta_0\wedge\Omega_0$. If $\varphi\in\mathcal{C}^\infty(\widetilde M)$ is a primitive of $\tilde\theta_0$, \emph{i.e.} $\tilde\theta_0=\di\varphi$, then the metric $g_K:=e^{2\varphi}\tilde g_0$ is K\"ahler.
		
		We denote by $\ti\xi$ the vector field induced by $\xi$ on $\widetilde M$, \emph{i.e.} $\pi_*\ti\xi=\xi$. Then $\ti\xi$  is a conformal vector field with respect to the conformal class $[\ti g_0]=[g_K]$, and thus its dual $1$-form $\eta_K:=g_K(\ti\xi, \cdot)$ is a conformal Killing $1$-form on the K\"ahler manifold $(\widetilde M, \ti J, g_K)$. The pull-back $\ti\eta_0$ of $\eta_0$ is related to $\eta_K$ by $\ti\eta_0=e^{-2\varphi}\eta_K$.
		
		We claim that $\delta_{g_K}\eta_K=-\pi^*f_\xi$, where $f_\xi$ is the function associated to the vector field~$\xi$, as defined by~\eqref{deffunction}. Indeed, we compute using the formula for the conformal change of the codifferential \cite[Thm. 1.159 i)]{besse}:
		\begin{equation*}
		\begin{split}
		\delta_{g_K}\eta_K&=e^{-2\varphi}(\delta_{\ti g_0}\eta_K-(n-2)\ti g_0(\eta_K, \di \varphi))=e^{-2\varphi}\delta_{\ti g_0}(e^{2\varphi}\ti\eta_0)-(n-2)\ti g_0(\ti\eta_0, \di \varphi)\\
		&=\delta_{\ti g_0}\ti\eta_0-2\ti g_0(\ti\eta_0, \di \varphi)-(n-2)\ti g_0(\ti\eta_0, \di \varphi)=\delta_{\ti g_0}\ti\eta_0-n\ti g_0(\ti\eta_0, \ti\theta_0)\\
		&=\pi^*(\delta_{g_0}\eta_0-n g_0(\eta_0, \theta_0))=\pi^*(-\mathrm{div}^{\nabla^{g_0}}\xi-n  \theta_0(\xi))=-\pi^*(f_\xi).
		\end{split}
		\end{equation*}	
		Since by Proposition~\ref{propkaehler}, the function $\pi^*(f_\xi)=-\delta_{g_K}\eta_K$ is $g_K$-harmonic, it follows that $f_\xi$ is $D$-harmonic, where $D$ is the standard Weyl structure of the lcK structure $(M,J,c)$. Applying Proposition~\ref{propDharmonic} we obtain that $f_\xi$ is constant, so \mbox{$f_{\xi}=C\in\mR$}. On the other hand, using the Gauduchon metric $g_0$ with its associated $1$-form~$\theta_0$, we  express $f_\xi$ as follows:
		$$C=f_\xi=\mathrm{div}^{\nabla^{g_0}}\xi+n\theta_0(\xi)=-\delta_0\eta_0+n\theta_0(\xi),$$
		hence 
		\begin{equation}\label{cst}\theta_0(\xi)=\frac{C}{n}+\frac{1}{n}\delta_0\eta_0.\end{equation}
		 By Cartan's formula we further compute: 
		$$\mathcal{L}_{\xi}\theta_0=\di(\xi\lrcorner \theta_0)+\xi\lrcorner \di\theta_0=\di(\theta_0(\xi))=\frac{1}{n}\di(\delta_0\eta_0).$$
		Applying now the codifferential $\delta_0$ to this equality and using Lemma \ref{l22}, we obtain:
		\begin{equation*}
		\begin{split}
		\frac{1}{n}\Delta_0(\delta_0\eta_0)&=\frac{1}{n}\delta_0\di(\delta_0\eta_0)=\delta_0\left(\mathcal{L}_{\xi}\theta_0\right)\\
		&\overset{\eqref{commLdelta}}{=}\mathcal{L}_{\xi}(\delta_0\theta_0)+\delta_0((\mathcal{L}_\xi g)(\theta_0^\sharp))-g_0(\theta_0, \di(\delta_0\eta_0))\\
		&\overset{\eqref{eqliemetric}}{=}-\frac{2}{n}\delta_0((\delta_0\eta_0)\theta_0)-g_0(\theta_0, \di(\delta_0\eta_0))\\
		&=-\frac{2}{n}(\delta_0\eta_0)\delta_0\theta_0+\frac{2}{n}g_0(\theta_0, \di(\delta_0\eta_0))-g_0(\theta_0, \di(\delta_0\eta_0))\\
		&=\frac{2-n}{n}g_0(\theta_0, \di(\delta_0\eta_0)),
		\end{split}
		\end{equation*}
		since $\delta_0\theta_0=0$ by the definition of the Gauduchon metric. Thus, we obtain:
		\begin{equation}\label{eq1}
		\Delta_0(\delta_0\eta_0)=(2-n)g_0(\theta_0, \di(\delta_0\eta_0)).
		\end{equation}
		Multiplying \eqref{eq1} with the function $\delta_0\eta_0$ and integrating over the compact manifold $M$ yields:
		$$\int_M (\delta_0\eta_0) \Delta_0(\delta_0\eta_0) \mathrm{vol}_{g_0}=\frac{2-n}2\int_M g_0(\theta_0, \di((\delta_0\eta_0)^2))\mathrm{vol}_{g_0},$$
     	so we obtain:
		$$\int_M  |\di(\delta_0\eta_0)|_0^2 \mathrm{vol}_{g_0}=\frac{2-n}2\int_M g_0(\delta_0\theta_0, (\delta_0\eta_0)^2)\mathrm{vol}_{g_0}=0,$$
		showing that $\di(\delta_0\eta_0)=0$. As $M$ is compact, it follows that the function $\delta_0\eta_0$ is constant and hence  vanishes, since  $\displaystyle\int_M  \delta_0\eta_0\, \mathrm{vol}_{g_0}=0$. Thus $\delta_0\eta_0=0$,  so $\xi$ is a Killing vector field with respect to the Gauduchon metric $g_0$.
		
		In order to prove the last statement, we remark that 
		$$\mathcal{L}_{\ti\xi}g_K=\mathcal{L}_{\ti\xi}(e^{2\varphi}\ti g_0)=2\ti\theta_0(\ti\xi)g_K=2\widetilde{\theta_0(\xi)}g_K,$$
		and $\theta_0(\xi)$ is constant by \eqref{cst} and the fact that  $\delta_0\eta_0=0$.
	\end{proof}

\section{Holomorphicity of conformal vector fields}

In this section we prove that on a compact lcK manifold, whose K\"ahler cover is neither flat nor hyperk\"ahler, every conformal vector field  is holomorphic. We first show a more general result about homothetic invariant vector fields on Riemannian products endowed with a cocompact and properly discontinuous action of a group of similarities.

Let us introduce the notation needed in the sequel. We denote the group of similarities of  a Riemannian manifold $(M,g)$ by
$$\mathrm{Sim}(M,g):=\{\phi\colon M\to M\, |\, \phi \text{ is a diffeomorphism and } \phi^*g=\lambda g, \text{for some } \lambda\in\mR_{>0}  \}.$$
 A vector field $\xi$ on $(M,g)$ is called homothetic if its flow consists of similarities. 

%Hence, if one of the components $\xi_1$ or $\xi_2$ vanishes, then $\lambda=0$, so $\xi$ is a Killing vector field with respect to the metric $g_1+g_2$.

\begin{prop}\label{propN}
	Let  $(N,g_N)\times (\mR^q,g_{\text{flat}})$ be the Riemannian product of a non-flat incomplete Riemannian manifold $(N, g_N)$ with irreducible holonomy and the Euclidean space $(\mR^q,g_{\text{flat}})$. We assume that there exists a  subgroup $\Gamma\subset \mathrm{Sim}(N\times \mR^q,g_N+g_{\text{flat}})$ which acts cocompactly and properly discontinuously on $N\times \mR^q$. Then every $\Gamma$-invariant homothetic vector field on $(N\times \mR^q,g_N+g_{\text{flat}})$ is tangent to $N$ and constant in the direction of $\mR^q$.
\end{prop}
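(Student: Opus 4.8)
The plan is to split both the vector field and the group along the Riemannian product, reduce the statement to the vanishing of the $\mR^q$-component $\xi_F$ of the field, and then play off the incompleteness of $N$ against the cocompactness and proper discontinuity of the $\Gamma$-action; I expect this last step to be the main obstacle.

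\emph{Splitting.} Let $\xi$ be a $\Gamma$-invariant homothetic vector field, $\mathcal{L}_\xi(g_N+g_{\text{flat}})=2c\,(g_N+g_{\text{flat}})$. Since homotheties preserve the Levi-Civita connection, $\xi$ is affine. The product $(N\times\mR^q,g_N+g_{\text{flat}})$ has exactly two canonical parallel distributions, coming from its de Rham decomposition: the maximal flat one $\mathrm{T}\mR^q$ and its orthogonal complement $\mathrm{T}N$, the latter being the unique non-flat, holonomy-irreducible parallel distribution since $(N,g_N)$ is non-flat with irreducible holonomy. Any affine transformation preserves both, hence their integral foliations, so the flow of $\xi$ splits, $\phi_t=(\psi_t,\chi_t)$, and $\xi=\xi_N+\xi_F$, where $\xi_N$ is the lift of a vector field on $N$ independent of the $\mR^q$-coordinate and $\xi_F$ the lift of a vector field on $\mR^q$ independent of the $N$-coordinate. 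Decomposing $\mathcal{L}_\xi g$ along $\mathrm{T}N\oplus\mathrm{T}\mR^q$ gives $\mathcal{L}_{\xi_N}g_N=2c\,g_N$ and $\mathcal{L}_{\xi_F}g_{\text{flat}}=2c\,g_{\text{flat}}$; in particular $\xi_N$ is constant in the direction of $\mR^q$, so the proposition reduces to $\xi_F=0$. (The flow $\phi_t$ descends to the compact quotient, hence it, and so $\psi_t$ and $\chi_t$, is complete.) The same reasoning splits $\Gamma$: every similarity $\gamma$ preserves the two canonical parallel distributions, so $\gamma=(a_\gamma,b_\gamma)$ with $a_\gamma\in\mathrm{Sim}(N,g_N)$ and $b_\gamma\in\mathrm{Sim}(\mR^q,g_{\text{flat}})$ of the same ratio $\lambda_\gamma$, and invariance of $\xi$ becomes $(a_\gamma)_*\xi_N=\xi_N$, $(b_\gamma)_*\xi_F=\xi_F$.

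\emph{Reductions.} A homothetic (hence affine) vector field on Euclidean space has the form $\xi_F(v)=Av+w$ with $A+A^\top=2c\,\mathrm{Id}$. Writing $b_\gamma(v)=\sqrt{\lambda_\gamma}\,O_\gamma v+t_\gamma$ with $O_\gamma\in\mathrm{O}(q)$, the invariance of $\xi_F$ forces $O_\gamma A=AO_\gamma$ and, in particular, $|\xi_F(b_\gamma v)|^2=\lambda_\gamma\,|\xi_F(v)|^2$. A compact $K$ with $\Gamma\cdot K=N\times\mR^q$ projects to a compact $K'\subset\mR^q$ with $\Gamma_F\cdot K'=\mR^q$, where $\Gamma_F:=\{b_\gamma:\gamma\in\Gamma\}$; and $\Gamma$ cannot act by isometries, as otherwise $(N\times\mR^q)/\Gamma$ would be a compact — hence complete — Riemannian manifold, and so would its Riemannian cover, contradicting the incompleteness of $N$. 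Thus $\{\lambda_\gamma\}$ is a non-trivial subgroup of $\mR_{>0}$, and in particular there is $\gamma_0\in\Gamma$ with $\lambda_{\gamma_0}<1$. Assume now $\xi_F\neq0$, for contradiction. The zero set $Z=\{\xi_F=0\}$ is then a non-empty, $\Gamma_F$-invariant affine subspace: if $c\neq0$, $A$ is invertible and $Z$ is a single point; if $c=0$, then $\xi_F(v)=Sv+w$ with $S$ skew, $|\xi_F|^2$ attains its minimum along an affine subspace, and since $b_\gamma$ maps the minimum set of $|\xi_F|^2$ onto itself while rescaling its minimum value by $\lambda_\gamma$, having some $\lambda_\gamma\neq1$ forces that value to be $0$, so $Z\neq\emptyset$. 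Moreover $\Gamma$ preserves the sub-product $N\times Z$, on which it still acts cocompactly and properly discontinuously by similarities, and $\Gamma_F\cdot K'=\mR^q$ forces the ratios $\lambda_\gamma$ to be unbounded.

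\emph{The core.} Here I expect the real work to lie. Since $\lambda_{\gamma_0}<1$, the map $b_{\gamma_0}$ contracts $\mR^q$ toward its unique fixed point, which lies in $Z$, while $a_{\gamma_0}$ is a contraction of $N$ of ratio $\sqrt{\lambda_{\gamma_0}}<1$. The key observation is that $a_{\gamma_0}$ has no fixed point in $N$: if $a_{\gamma_0}^{\,k}x$ converged in $N$ then, together with $b_{\gamma_0}^{\,k}v$ converging, the orbit point $\gamma_0^{\,k}(x,v)$ would eventually lie in a fixed compact set $W$, whence $\gamma_0^{\,m}W\cap W\neq\emptyset$ for every $m\in\mZ$; as $\gamma_0$ has infinite order (its powers have pairwise distinct ratios), this contradicts proper discontinuity. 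Hence $a_{\gamma_0}^{\,k}x$ leaves every compact subset of $N$ for every $x$, i.e.\ it runs toward the metric boundary of $N$; being a contraction, $a_{\gamma_0}$ extends to a self-map of the metric completion $\bar N$ fixing a point $p_*\in\partial N$ which is a \emph{singular} point of $\bar N$, because $N$ is non-flat. Confronting this with the cocompactness and proper discontinuity of the full $\Gamma$-action on $N\times\mR^q$ — keeping track of how the similarities in $\Gamma$ extend to $\bar N$ and interact near the singular point $p_*$, the translates of $K$ being swept into that region by the expanding powers $\gamma_0^{-k}$ — should produce the desired contradiction; carrying this out cleanly is, I expect, the crux of the proof. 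Once $\xi_F=0$ is established, $\xi=\xi_N$ is tangent to $N$ and, being independent of the $\mR^q$-coordinate, constant in the direction of $\mR^q$, as claimed.
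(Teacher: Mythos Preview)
Your splitting and reductions are fine and parallel the paper, but the ``core'' is a genuine gap, and in fact your sketch there does not use the hypothesis $\xi_F\neq0$ in any essential way. Everything you write about $a_{\gamma_0}$ being a fixed-point-free contraction of $N$, and about its extension to the completion $\bar N$, holds regardless of whether $\xi_F$ vanishes; so no contradiction with $\xi_F\neq0$ can emerge from that line of argument. The passage through $Z$ and the sub-product $N\times Z$ is also a dead end: on $N\times Z$ the field $\xi_F$ vanishes by definition, so restricting there loses exactly the information you need. (Incidentally, the asserted cocompactness of $\Gamma$ on $N\times Z$ is not obvious and would require proof.)

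The paper's argument is quite different in its core step and avoids the completion entirely. Writing $(X_2)_p=Cp+v$ with $C=\lambda I_q+A$, one first normalizes so that $v\in\ker(C)$, and then extracts from the invariance $(b_\gamma)_*X_2=X_2$ not only $[B_\gamma,C]=0$ but also, after a short case analysis, the key relation $Cw_\gamma=0$ for \emph{every} $\gamma\in\Gamma$ (and hence $v=0$ since some $B_\gamma\neq I_q$). This says that all translational parts of $\Gamma_F$ lie in $\ker(C)$, orthogonal to $\mathrm{Im}(C)$. One then takes any $p\in\mathrm{Im}(C)\setminus\{0\}$, forms the sequence $\tilde z_n=(x,np)$, passes to a convergent subsequence $z_{k_n}\to z_0$ in the compact quotient, and lifts to get $\gamma_n\cdot(x,k_np)\to\tilde z_0$. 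Decomposing the $\mR^q$-component along $\mathrm{Im}(C)\oplus\ker(C)$, the $\ker(C)$-part is just $w_{\gamma_n}$ and therefore converges; together with $\gamma'_n(x)\to x_0$ this gives $\gamma_n\cdot(x,0)\to(x_0,w_0)$, so proper discontinuity forces $(\gamma_n)$ to be eventually constant. But then the $\mathrm{Im}(C)$-part reads $k_nB_{n_0}p\to Cy_0$ with $k_n\to\infty$, whence $p=0$, a contradiction. The decisive idea you are missing is the relation $Cw_\gamma=0$, which decouples the unbounded $\mathrm{Im}(C)$-direction from the translational parts of $\Gamma_F$ and makes the proper-discontinuity argument go through.
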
	

\begin{proof}
Let us denote in this proof by $\pi\colon N\times \mR^q\to (N\times \mR^q)/\Gamma$ the projection given by the action of~$\Gamma$. Let $X$ be a $\Gamma$-invariant homothetic vector field on $(N\times \mR^q,g_N+g_{\text{flat}})$. We write $X=X_1+X_2$, with $X_1\in \mathrm{T}N$ and $X_2\in \mathrm{T}\mR^q$. The flow $(\psi_t)_t$ of $X$ preserves the decomposition $\mathrm{T}N\oplus\mR^q$, because any similarity preserves the flat factor of the de Rham decomposition. Thus, the following inclusions hold: $\psi_{t*}(\mathrm{T}N)\subset \mathrm{T}N$ and $\psi_{t*}(\mR^q)\subset \mR^q$, so $[X, \mathrm{T}N]\subset \mathrm{T}N$ and $[X, \mR^q]\subset \mR^q$, which further imply that $\nabla_{\mathrm{T}N} X\subset \mathrm{T}N$ and $\nabla_{\mR^q} X\subset \mR^q$, where $\nabla$ denotes the Levi-Civita connection of $g_N+g_{\text{flat}}$. Hence, 
\begin{equation}\label{constant}\nabla_{\mathrm{T}N} X_2=0\quad\mathrm{and}\quad\nabla_{\mR^q} X_1=0,
\end{equation} 
showing that $X_1$ and $X_2$ are conformal vector fields on the factors and are constant in the direction of the other factor. Clearly $X_1$ and $X_2$ are $\Gamma$-invariant. We need to show that $X_2=0$.

The conformal vector field $X_2$ on the Euclidean space $\mR^q$ is given as follows at each $p\in\mR^q$: $(X_2)_p=Cp+v$, where $C=\lambda I_q+ A$, for some skew-symmetric matrix $A\in M(q, \mR)$,  $\lambda\in\mR$ and $v\in\mR^q$, and $I_q\in M(q, \mR)$ denotes the identity matrix. 

We claim that by applying a translation in $\mR^q$ one can assume that $v\in\mathrm{Ker}(C)$. In order to prove this we distinguish the following two cases:\\
{\bf Case 1.} If $\lambda\neq 0$, then $C$ is invertible and $(X_2)_p=C(p+C^{-1}v)$, so choosing the origin of the flat factor $\mR^q$ at $-C^{-1}v$, we may assume that $v=0$.\\
{\bf Case 2.} If $\lambda=0$, then $C=A$ is a skew-symmetric matrix. Considering the orthogonal splitting $\mR^q=\mathrm{Im}(C) \oplus \mathrm{Ker}(C)$, we decompose correspondingly $v=Cv_1+v_2$, with $v_2\in  \mathrm{Ker}(C)$. Thus $(X_2)_p=C(p+v_1)+v_2$, so choosing the origin of the flat factor $\mR^q$ at $-v_1$, we may assume that $v\in\mathrm{Ker}(C)$.

The flow of $X_2$ is given as follows: $\varphi_t(p)=e^{tC}\left(p+\displaystyle\int_0^t e^{-sC}v \di s\right)$. Since $v\in\mathrm{Ker}(C)$, this further simplifies to $\varphi_t(p)=e^{tC}p+tv$.

 Now, for every element $\gamma\in \Gamma$, we write $\gamma(x,y)=(\gamma'(x), \gamma''(y))$, for all $(x,y)\in N\times \mR^q$, with $\gamma''(y)=B_{\gamma} y+w_{\gamma}$, $B_{\gamma}\in \CO(q)$ and $w_{\gamma}\in\mR^q$. Since $X_2$ is $\Gamma$-invariant, it follows that its flow $(\varphi_t)_t$ commutes with $\gamma''$, that is:
$$B_{\gamma}(e^{tC}p+tv)+w_{\gamma}=e^{tC}(B_{\gamma} p+w_{\gamma})+tv,\quad \forall p\in \mR^q,\; \forall t\in\mR,$$
or, equivalently:
\begin{equation*}
\begin{cases}
[B_{\gamma}, e^{tC}]=0\\
t(B_{\gamma}-I_q)v=(I_q-e^{-tC})w_{\gamma}.
\end{cases}
\end{equation*}
Differentiating at $t=0$ yields for all $\gamma\in\Gamma$:
\begin{equation}\label{eqv}
\begin{cases}
[B_{\gamma}, C]=0\\
(B_{\gamma}-I_q)v=Cw_{\gamma}.
\end{cases}
\end{equation}

We claim that $Cw_{\gamma}=0$, for all $\gamma\in\Gamma$. We show this  separately for the two cases introduced above. In the first case, if $\lambda\neq 0$, then we have already seen that we may assume $v=0$. Hence, \eqref{eqv} directly implies that $Cw_{\gamma}=0$ for all $\gamma$. In the second case, if $\lambda=0$, then $C$ is a skew-symmetric matrix and we have shown that one may assume $v\in\mathrm{Ker}(C)$. Therefore, since $B_{\gamma}$ and $C$ commute, it follows that the left-hand side of the second equality in \eqref{eqv} also belongs to the kernel of $C$, $(B_{\gamma}-I_q)v\in\mathrm{Ker}(C)$. The right-hand side belongs to $\mathrm{Im}(C)$ and since $\mathrm{Im}(C)\perp \mathrm{Ker}(C)$, because $C$ is skew-symmetric, we conclude that both sides of this equality vanish, so, in particular, $C w_{\gamma}=0$.

Since $Cw_{\gamma}=0$ and there exists at least a strict homothety $B_{\gamma}$, it follows from the second equality in \eqref{eqv} that $v=0$. Thus, $(X_2)_p=Cp$, for all $p\in\mR^q$.

Assume, for a contradiction, that $X_2\neq 0$, \emph{i.e.} $C\neq 0$. Let us now fix some $p\in\mathrm{Im}(C)\setminus\{0\}$ and $x\in N$ and consider the sequence $\widetilde z_n:=(x, np)$ in $N\times \mR^q$, as well as its image $z_n:=\pi(\widetilde z_n)$ in $(N\times \mR^q)/\Gamma$. Since $M:=(N\times \mR^q)/\Gamma$ is compact, there exists a convergent subsequence  of $(z_n)_n$, \emph{i.e.} there exists a strictly increasing sequence $(k_n)_n\subset\mN$ and $z_0\in M$ such that $z_{k_n}\to z_0$. Let $V$ be a neighbourhood of $z_0$, such that there exists $\widetilde V\subset N\times \mR^q$ with $\pi|_{\widetilde V}\colon \widetilde V\to V$ diffeomorphism. 

Let $\widetilde z_0:=(\pi|_{\widetilde V})^{-1}(z_0)$ and, for all $n\in \mN$ sufficiently large in order to ensure that $z_{k_n}\in V$, define $\widetilde y_n:=(\pi|_{\widetilde V})^{-1}(z_{k_n})$. Then the sequence $(\widetilde y_n)_n$ converges to $\widetilde z_0$. Since $z_{k_n}=\pi(\widetilde z_{k_n})=\pi(\widetilde y_n)$, there exists a sequence $(\gamma_n)_n\in\Gamma$, such that $\widetilde y_n=\gamma_n(\widetilde z_{k_n})$. We may now write according to the results obtained above that $\gamma_n(x,y)=(\gamma'_n(x), \gamma''_n(y))$, for all $(x,y)\in N\times \mR^q$, where $\gamma''_n(y)=B_{n} y+w_{n}$, with $B_{n}\in \CO(q)$ and $w_n\in\mR^q$, such that $[B_n, C]=0$ and $Cw_n=0$. The equality  $\widetilde y_n=\gamma_n(\widetilde z_{k_n})$ thus yields
$$\widetilde y_n=\gamma_n(\widetilde z_{k_n})=(\gamma'_n(x), \gamma''_n(k_np))=(\gamma'_n(x), k_n B_n p+w_n)\underset{n\to\infty}\longrightarrow \widetilde{z}_0.$$
Since $k_nB_np\in \mathrm{Im}(C)$ and $w_n\in\mathrm{Ker}(C)$, one can write $\widetilde{z}_0=(x_0, Cy_0+w_0)$ for some $y_0\in \mR^q$ and $w_0\in\mathrm{Ker}(C)$. 

Using that $\mathrm{Im}(C)\oplus \mathrm{Ker}(C)= \mR^q$, we deduce that $\gamma'_n(x)\to x_0$, $k_n B_n p\to C y_0$ and $w_n \to w_0$. In particular, $\gamma_n(x,0)=(\gamma'_n(x), w_n)\to (x_0, w_0)$. From this convergence and
the fact that $\Gamma$ acts properly discontinuously on $N\times \mR^q$, it follows that the sequence $(\gamma_n)_n$ is stationary, \emph{i.e.} there exists $n_0$ such that $\gamma_n=\gamma_{n_0}$, for all $n\geq n_0$.  In particular, $B_n=B_{n_0}$, for $n\geq n_0$, so from $k_n B_n p=k_n B_{n_0} p\to C y_0$, with $(k_n)_n\subset\mN$ strictly increasing, we conclude that  $B_{n_0} p=0$, so $p=0$, which contradicts the fact that $p\in\mathrm{Im}(C)\setminus\{0\}$. Thus we conclude that $X_2=0$.
\end{proof}

We are now ready for the second main result of this paper.

\begin{theorem}\label{thmholomorph}
	Let $(M,J,c)$ be a compact lcK manifold. If the K\"ahler cover $(\widetilde M, \ti J, g_K)$ is neither flat nor hyperk\"ahler, then every conformal vector field on $(M, c)$ is holomorphic.
\end{theorem}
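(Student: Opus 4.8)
The plan is to lift $\xi$ to the Kähler cover $(\widetilde M,\ti J,g_K)$, prove that the lift $\ti\xi$ is holomorphic there, and conclude that $\xi$ is holomorphic on $M$, since $\pi\colon\widetilde M\to M$ is a local biholomorphism with $\pi_*\ti\xi=\xi$. By Theorem~\ref{thmkilling}, $\ti\xi$ is homothetic with respect to $g_K$, say $\mathcal{L}_{\ti\xi}g_K=2c\,g_K$ for a constant $c\in\mR$; in particular $\ti\xi$ is affine, i.e.\ $\mathcal{L}_{\ti\xi}\nabla^{g_K}=0$. Set $\Psi:=\mathcal{L}_{\ti\xi}\ti J$. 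Since $\ti J$ is $\nabla^{g_K}$-parallel and $\ti\xi$ is affine, $\Psi$ is $\nabla^{g_K}$-parallel; differentiating $\ti J^2=-\Id$ along $\ti\xi$ shows that $\Psi$ anticommutes with $\ti J$; and from $\mathcal{L}_{\ti\xi}\Omega_K=2c\,\Omega_K+g_K(\Psi\,\cdot\,,\,\cdot\,)$, where $\Omega_K:=g_K(\ti J\,\cdot\,,\,\cdot\,)$, one sees that $g_K(\Psi\,\cdot\,,\,\cdot\,)$ is a parallel $2$-form, so $\Psi$ is $g_K$-skew-symmetric. On an irreducible Kähler manifold such a $\Psi$ must vanish, unless a rescaling of it is a second parallel complex structure anticommuting with $\ti J$, which would make the metric hyperkähler; this is the content of \cite[Lemma 2.1]{mo}. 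Hence, if $(\widetilde M,g_K)$ is irreducible, it is neither flat nor hyperkähler by hypothesis, so $\Psi=0$ and $\ti\xi$ is holomorphic.

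Now assume $(\widetilde M,g_K)$ has reducible holonomy. As $g_K$ is not flat, Kourganoff's structure theorem \cite[Theorem 1.5]{k} gives an isometry $(\widetilde M,g_K)\cong(\mR^q,g_{\text{flat}})\times(\mathcal{C}(S),g_{\mathcal{C}})$ with $q\ge 1$, where $(\mathcal{C}(S),g_{\mathcal{C}})$ is the Riemannian cone over a compact manifold $S$. The cone $\mathcal{C}(S)$ is incomplete (the apex is missing) and, being non-flat (else $\widetilde M$ would be flat), has irreducible holonomy by Gallot \cite[Prop. 3.1]{gallot}. Because $\ti J$ is parallel, it commutes with the holonomy of $g_K$, hence preserves the holonomy-fixed subbundle $\mathrm{T}\mR^q$ and thus also $\mathrm{T}\mathcal{C}(S)=(\mathrm{T}\mR^q)^{\perp}$; consequently $(\mathcal{C}(S),\ti J|_{\mathcal{C}(S)},g_{\mathcal{C}})$ is an irreducible Kähler manifold. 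The deck group $\Gamma=\pi_1(M)$ acts on $\widetilde M$ cocompactly (since $M$ is compact), properly discontinuously, and by holomorphic similarities of $g_K$, and it preserves $\ti\xi$; Proposition~\ref{propN} therefore shows that $\ti\xi$ is tangent to $\mathcal{C}(S)$ and constant in the direction of $\mR^q$. In particular the restriction of $\ti\xi$ to $\mathcal{C}(S)$ is an affine vector field on the irreducible Kähler manifold $(\mathcal{C}(S),\ti J|_{\mathcal{C}(S)},g_{\mathcal{C}})$, and repeating the argument of the first paragraph there yields that either $\ti\xi$ is holomorphic on $\widetilde M$, or $\mathcal{C}(S)$ is hyperkähler.

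The final step is to exclude the case where $\mathcal{C}(S)$ is hyperkähler. If it were, then $\mathcal{C}(S)$, and hence $(\widetilde M,g_K)=(\mR^q,g_{\text{flat}})\times(\mathcal{C}(S),g_{\mathcal{C}})$, would be Ricci-flat, so the standard Weyl connection $D$ of $(M,J,c)$ — which pulls back to $\nabla^{g_K}$ on $\widetilde M$ — would be Einstein--Weyl with vanishing Einstein constant. Moreover $D$ is not exact: otherwise $M$ would be globally conformally Kähler and its universal cover complete, contradicting the incompleteness of the factor $\mathcal{C}(S)$. Applying Tod's result \cite[Prop. 2.2]{tod} to the compact Einstein--Weyl manifold $(M,c,D)$ then forces the Kähler metric, transported to the minimal cover of $M$ on which $D$ becomes exact, to be a Riemannian cone over a compact manifold; lifting this to $\widetilde M$, the latter is a non-flat cone over a complete manifold, hence has irreducible holonomy by Gallot \cite[Prop. 3.1]{gallot} — contradicting reducibility. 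Thus $\mathcal{C}(S)$ is not hyperkähler, $\ti\xi$ is holomorphic on $\widetilde M$, and $\xi$ is holomorphic on $M$.

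The routine parts are the reduction to the cover and the parallel-tensor computation, which settles the irreducible case at once. The hard part is the reducible case: it hinges on Kourganoff's structure theorem (used as a black box) to reduce to a product of a flat space with a non-flat cone, on Proposition~\ref{propN} to push $\ti\xi$ entirely onto the cone factor, and — most delicately — on ruling out the hyperkähler cone sub-case, where one must notice that Ricci-flatness promotes the standard Weyl structure to an Einstein--Weyl one and then play Tod's rigidity result against Gallot's irreducibility of non-flat cones to contradict reducibility.
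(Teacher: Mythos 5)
Your overall strategy is the same as the paper's: lift $\xi$, use Theorem~\ref{thmkilling} to make $\ti\xi$ homothetic hence affine, settle the irreducible case via \cite[Lemma 2.1]{mo}, and in the reducible case combine Kourganoff's splitting with Proposition~\ref{propN} and then exclude the hyperk\"ahler factor. Your exclusion of the hyperk\"ahler sub-case (Ricci-flat $\Rightarrow$ Einstein--Weyl $\Rightarrow$ Tod $\Rightarrow$ Vaisman $\Rightarrow$ cone metric $\Rightarrow$ Gallot's irreducibility, contradicting reducibility) is precisely the alternative argument given in the paper's closing Remark; the paper's main text instead applies Proposition~\ref{propN} a second time to the lifted Lee vector field $\widetilde T$ to force $\theta_0=0$. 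Both routes are sound.

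There is, however, one genuine gap: you never dispose of the case where the lcK structure is globally conformally K\"ahler, i.e.\ where the Gauduchon Lee form $\theta_0$ vanishes identically (equivalently, the standard Weyl connection $D$ is exact and $\Gamma=\pi_1(M)$ acts on $(\widetilde M,g_K)$ by isometries). Kourganoff's Theorem 1.5 requires the similarity structure to be non-Riemannian; without that hypothesis its conclusion is simply false (the universal cover of a product of two compact K\"ahler manifolds is reducible, non-flat, and has no incomplete factor), so your second paragraph does not get off the ground in that case. Your attempted justification in the third paragraph --- ``$D$ is not exact: otherwise \dots contradicting the incompleteness of the factor $\mathcal{C}(S)$'' --- is circular, since the incomplete factor is itself an output of Kourganoff's theorem. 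The fix is easy and is the first paragraph of the paper's proof: if $\theta_0\equiv 0$ then $(M,g_0,J)$ is compact K\"ahler, $\xi$ is $g_0$-Killing, and $\mathcal{L}_\xi\Omega_0=\di(\xi\lr\Omega_0)$ is an exact harmonic form, hence zero, so $\xi$ is holomorphic; once $\theta_0\not\equiv 0$, harmonicity of $\theta_0$ shows $D$ is non-exact and Kourganoff applies. A further, purely presentational point: you attribute to \cite[Theorem 1.5]{k} that the non-flat factor is a Riemannian cone over a compact manifold; the statement actually used (and all you need) is that this factor is non-flat, incomplete and has irreducible holonomy, so you need not detour through Gallot to obtain its irreducibility.
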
	

\begin{proof}
	Let $\xi$ be a conformal vector field on $(M,c)$. Then, according to Theorem~\ref{thmkilling}, $\xi$ is a Killing vector field with respect to the Gauduchon metric $g_0\in c$. 
	
	 If  the Lee form $\theta_0$ of $g_0$ vanishes identically, then $(M,g_0,J)$ is Kähler, and a standard argument shows that $\xi$ is holomorphic. Indeed, the Kähler form  $\Omega_0$ of $(g_0,J)$ is harmonic and so is its Lie derivative with respect to the Killing vector field $\xi$. On the other hand, since $\di\Omega_0=0$, Cartan's formula shows that $\mathcal{L}_\xi\Omega_0=\di(\xi\lrcorner\Omega_0)$ is also exact. A harmonic form which is exact vanishes identically, so $0=\mathcal{L}_\xi\Omega_0=g_0(\mathcal{L}_\xi J\cdot,\cdot),$ whence $\xi$ is holomorphic.

	 We thus assume for the rest of the proof that $\theta_0$ is not identically zero.

Let $\ti\xi$ denote the vector field induced by $\xi$ on $\widetilde M$, \emph{i.e.} $\pi_*(\ti\xi)=\xi$, where $\pi\colon \widetilde M\to M$ is the projection of the universal cover. By the last part of Theorem~\ref{thmkilling}, $\ti\xi$ is homothetic with respect to the K\"ahler metric~$g_K$. In particular, $\ti\xi$ is affine with respect to the Levi-Civita connection $\nabla^{g_K}$. We distinguish the following two cases:

	{\bf Case 1.} If $\mathrm{Hol}(g_K)$ is irreducible, then any transformation in the connected component of the identity of the group of affine transformations of $\nabla^{g_K}$ is holomorphic, \emph{cf.} \cite[Lemma 2.1.]{mo}, because $(\widetilde M, \ti J,  g_K)$ is assumed to be neither flat nor hyperk\"ahler. Applying this result to  the flow of $\ti\xi$, yields that $\ti\xi$ is a holomorphic vector field on $(\widetilde M, \ti J)$, which finishes the proof in the first case.
	 
{\bf Case 2.} If $\mathrm{Hol}(g_K)$ is reducible, then a result of M. Kourganoff, \cite[Theorem 1.5.]{k}, implies that the K\"ahler cover splits as a Riemannian product $(\widetilde M, g_K)\simeq (N, g_N)\times  (\mR^q,g_{\text{flat}})$, where $q$ is even and the metric $g_N$ is non-flat, incomplete and has irreducible holonomy. Applying Proposition~\ref{propN} to the action of $\Gamma:=\pi_1(M)$ on $(\widetilde M,  g_K)$, we conclude that $\ti \xi$ is tangent to $N$ and constant in the direction of $\mR^q$, \emph{i.e.} there exists a homothetic vector field $\zeta$ on $(N, g_N)$ $\ti\xi_{(x,y)}=\zeta_{x}$, for all $(x,y)\in N\times \mR^q$. 
	 
	 We argue by contradiction and assume that $\zeta$ is not holomorphic. Since $g_N$ has irreducible holonomy, we conclude, applying again Lemma 2.1. from \cite{mo}, that $(N,g_N)$ is hyperk\"ahler, so, in particular, $g_N$ is Ricci-flat. Thus also $(\widetilde M, g_K)$ is Ricci-flat and, consequently, the standard Weyl connection $D$ on $(M,c)$ is Weyl-Einstein. By a result of K.P.~Tod~\cite[Prop. 2.2]{tod} it follows that the dual vector field $T:=\theta_0^\sharp$ is  Killing with respect to $g_0$, which implies that $(M, J, g_0)$ is Vaisman, \emph{i.e.} $\nabla^{g_0} T=0$. Writing $g_K=e^{2\varphi}\tilde g_0$, with $\di\varphi=\ti\theta_0$, yields
	 $\mathcal{L}_{\widetilde T}g_K=2\ti g_0(\widetilde T,\widetilde T) g_K$.
	 
	Since $\ti g_0(\widetilde T,\widetilde T)$ is constant, the induced vector field $\widetilde T$ is homothetic on $(\widetilde M, g_K)$. By Proposition~\ref{propN} again, $\widetilde T$ is tangent to $N$ and is constant in the direction of $\mR^q$. Such a vector field can only be homothetic if it is Killing. Thus $\ti g_0(\widetilde T,\widetilde T)=0$, so $\theta_0=0$, which was excluded.
	 
	  Our assumption is thus false, showing that $\zeta$ is holomorphic on $N$, so $\ti\xi$ is holomorphic on $N\times \mR^q$, and therefore $\xi$ is a holomorphic vector field on~$(M, J)$.
\end{proof}

\begin{remark}
	An alternative argument for the second case in the proof of Theorem~\ref{thmholomorph} is the following. Assuming that $\xi$ is not holomorphic, and that $(\widetilde M,g_K)$ has reducible holonomy, we obtain as before that $(M, J, g_0)$ is Vaisman, so the universal cover $(\widetilde M, \ti g_0)$ carries a non-trivial parallel 1-form $\ti\theta_0$. Consequently, it splits as a Riemannian product $(\widetilde M, \ti g_0)=(\mR,\di\varphi^2)\times (S,g_S)$, with $\ti\theta_0=\di\varphi$ and $(S,g_S)$ complete. Then after the change of variable $r:=e^\varphi$, the Kähler metric on $\widetilde M$ reads $g_K=e^{2\varphi}\ti g_0=\di r^2+r^2g_S$. Thus $(\widetilde M,g_K)$ is isometric to the Riemannian cone of $(S,g_S)$, so it is irreducible by S. Gallot's result \cite[Prop. 3.1]{gallot}. This contradiction shows that $\xi$ is holomorphic.
\end{remark}

\end{document}